\title{Relatively Computably Enumerable Reals}
\author{Bernard A. Anderson}
\newtheorem{theorem}{Theorem}[section]
\newtheorem*{claim}{Claim}
\theoremstyle{definition}
\newtheorem{definition}{Definition}
\newcommand{\setsep}{\ensuremath{\, | \;}}
\newcommand{\reals}{2^\omega}
\newcommand{\strings}{2^{<\omega}}
\newcommand{\num}{\in\omega}
\newcommand{\cset}[1]{\ensuremath{\overline{#1}}}
\newcommand{\concat}{^\smallfrown}
\newcommand{\rstrd}{\mathrel{\mbox{\raisebox{.5mm}{$\upharpoonright $}}}} 
\newcommand{\smrstrd}{\mathrel{\mbox{\raisebox{.2mm}{$\scriptstyle \upharpoonright $}}}}
\newcommand{\es}{\emptyset}
\newcommand{\ce}{c.e.\ }
\begin{document}
\maketitle

\begin{abstract}
A real $X$ is defined to be relatively \ce if there is a real $Y$ such that $X$ is c.e.($Y$) and $X \not\leq_T Y$.  A real $X$ is 
relatively simple and above if there is a real $Y <_T X$ such that $X$ is c.e.($Y$) and there is no infinite set $Z \subseteq \cset{X}$ such 
that $Z$ is c.e.($Y$).  We prove that every nonempty $\Pi^0_1$ class contains a member which is not relatively c.e.\ and that every 
1-generic real is relatively simple and above.
\end{abstract}

\section{Introduction}

In this paper, we consider the following question: For which $X \in \reals$ is there a $Y$ such that $X$ is c.e.($Y$) but $X \not\leq_T Y$?  Such 
reals $X$ are said to be relatively computably enumerable.  Similarly, we can also look at which reals hold other basic properties (such as being 
simple) relative to another real.  We briefly summarize some known results and prove two new theorems on the topic.

We begin with some definitions.

\begin{definition}A real $X$ is relatively \ce if there is a real $Y$ such that $X$ is c.e.($Y$) and $X \not \leq_T Y$. \end{definition}
\begin{definition}A real $X$ is relatively c.e.a.\ (computably enumerable and above) if there is a $Y <_T X$ such that $X$ is c.e.($Y$).    \end{definition}
\begin{definition}A real $X$ is relatively s.a.\ (simple and above) if there is a real $Y <_T X$ such that $X$ is c.e.($Y$) and there is no 
infinite set $Z \subseteq \cset{X}$ such that $Z$ is c.e.($Y$).   \end{definition}
\begin{definition} For reals $A$ and $B$, we say $A \leq_e B$ ($A$ is enumeration below $B$) if there is a $\Sigma_1$ set $C$ such that 
$n \in A$ iff there is a finite $E \subseteq B$ (as a set) with $(n,E) \in C$.  \end{definition}
\begin{definition}A real $X$ is 1-generic if for every $\Sigma_1$ set $S \subseteq \strings$ either $X$ meets $S$ or for some $k \num$, 
every $\sigma \supseteq X \rstrd k$ is such that $\sigma \notin S$. \end{definition}

Jockusch \cite{gen} proved that every 1-generic real is relatively c.e.a., and Kurtz \cite{meas} showed that the set of relatively c.e.a.\ reals has 
measure one.  Kautz \cite{kau} improved this by demonstrating that all 2-random reals are relatively c.e.a.  

While the set of relatively c.e.\ reals seems very large, there is a natural limit to its size.  Given any real $X$, we can find a real $Y$ in the same truth-table degree such that $Y$ is not relatively c.e.  We simply let $Y$ code the initial segments of $X$\@.  More generally, a real $X$ is not relatively c.e.\ any time $\cset{X} \leq_e X$.  It follows from a result of Selman \cite{Selman} that this is the only case in which a real is not relatively c.e.  

\begin{theorem}[Selman \cite{Selman}] $X$ is relatively c.e.\ if and only if $\cset{X} \not\leq_e X$. \label{SelThm} \end{theorem}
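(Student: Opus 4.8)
The plan is to prove both implications of Theorem~\ref{SelThm}, with the reverse direction resting on Selman's characterization of enumeration reducibility. Throughout I will use two standard facts: that $X$ is \ce($Y$) if and only if $X \leq_e Y \oplus \cset{Y}$, and that if both $X$ and $\cset{X}$ are \ce($Y$) then $X \leq_T Y$ (a set that is simultaneously \ce($Y$) and co-\ce($Y$) is computable in $Y$).

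First I would dispatch the easy direction by contraposition. Suppose $\cset{X} \leq_e X$, and let $Y$ be any real with $X$ \ce($Y$). Then $X \leq_e Y \oplus \cset{Y}$, and since $\leq_e$ is transitive we get $\cset{X} \leq_e X \leq_e Y \oplus \cset{Y}$; hence $\cset{X}$ is also \ce($Y$). As $X$ and $\cset{X}$ are both \ce($Y$), the second standard fact gives $X \leq_T Y$. Since $Y$ was arbitrary, no real witnesses that $X$ is relatively computably enumerable, so $X$ is not relatively c.e.

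For the reverse direction I would invoke Selman's characterization, which states that $A \leq_e B$ if and only if, for every real $S$, $B$ \ce($S$) implies $A$ \ce($S$). Assuming $\cset{X} \not\leq_e X$, the contrapositive of this characterization (with $A = \cset{X}$ and $B = X$) yields a real $S$ with $X$ \ce($S$) but $\cset{X}$ not \ce($S$). Set $Y = S$. Then $X$ is \ce($Y$) by the choice of $S$, and $X \not\leq_T Y$: otherwise $\cset{X} \leq_T Y$, so $\cset{X}$ would be \ce($Y$), contradicting the choice of $S$. Thus $Y$ witnesses that $X$ is relatively c.e.

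The substantive content, and the main obstacle, is Selman's characterization itself, specifically its nontrivial direction: from $A \not\leq_e B$ one must produce a single real $S$ that makes $B$ enumerable in $S$ while keeping $A$ non-enumerable in $S$. I would carry this out by a finite-extension (forcing) construction in which $S$ is built as a generic enumeration of $B$, so that $B \leq_e S$ (hence $B$ is \ce($S$)), while diagonalizing against every enumeration operator to force $\Phi_e^{S \oplus \cset{S}} \neq A$. The delicate point is that the construction lets $S$ reveal only positive information about $B$; this is exactly why the hypothesis $A \not\leq_e B$, rather than the stronger condition $A \not\leq_e B \oplus \cset{B}$, is the correct and usable one, since at each stage non-reducibility supplies a finite extension on which the current operator fails to enumerate $A$ correctly. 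Once this lemma is available the equivalence follows at once, and I expect no further difficulty beyond organizing the requirements and verifying that non-reducibility yields a suitable extension at each step.
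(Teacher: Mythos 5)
Your proposal is correct, but there is nothing in the paper to compare it against: the paper states Theorem~\ref{SelThm} as a quoted result, attributed to Selman \cite{Selman}, and gives no proof of it. Judged on its own terms, your argument is sound and is essentially the standard proof from the literature. The decomposition is right: the easy direction via $X$ c.e.($Y$) iff $X \leq_e Y \oplus \cset{Y}$, transitivity of $\leq_e$, and the relativized Post observation that a set c.e.($Y$) and co-c.e.($Y$) is computable in $Y$; and the hard direction via Selman's characterization that $A \leq_e B$ iff every oracle enumerating $B$ enumerates $A$, of which the paper's Theorem~\ref{SelThm} is the instance $A = \cset{X}$, $B = X$. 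You also correctly locate the crux of the characterization lemma and the reason the construction works: forcing with finite partial enumerations of $B$ means a condition reveals only a finite \emph{positive} fragment of $B$, so if no diagonalizing extension exists for operator $e$, the set $\{ n \setsep \exists \tau \supseteq \sigma \mbox{ legal with } n \in W^\tau_e \}$ equals $A$ and is witnessed by finite subsets of $B$, giving $A \leq_e B$ outright rather than merely $A \leq_e B \oplus \cset{B}$. Two bits of housekeeping you should add in a full write-up: interleave requirements guaranteeing the enumeration is onto $B$ (so that $B$ is genuinely c.e.($S$)), and note that in the application the degenerate case of finite $B = X$ never arises, since $X$ finite makes $\cset{X}$ c.e.\ and hence $\cset{X} \leq_e X$, contradicting the hypothesis. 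Neither point affects the correctness of the outline.
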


Selman's result shows that the set of relatively c.e.\ reals is, in some sense, as large as possible.

Jockusch showed, in a paper by Case \cite{Case}, that every Turing degree contains a real $X$ such that $\cset{X} \not\leq_e X$, and 
hence by Theorem \ref{SelThm}, that every Turing degree contains a real which is relatively c.e.  If we apply this to a minimal degree, 
we have a real which is relatively c.e.\ but not relatively c.e.a.  For recent results on relatively c.e.a.\ reals, see Ambos-Spies, Ding, Wang, 
and Yu \cite{ceaResults}.

Jockusch and Soare \cite{hifran} showed that every nonempty $\Pi_1^0$ class contains a real which has hyperimmune-free degree and hence is not 
relatively c.e.a.  We sharpen this by showing every nonempty $\Pi_1^0$ class contains a real which is not relatively c.e.  
We also improve Jockusch's \cite{gen} result that every 1-generic real is relatively c.e.a., by proving that every 1-generic real is relatively s.a. 

This work formed part of the author's Ph.D.\ thesis at the University of California at Berkeley.  We thank Theodore Slaman, the 
dissertation supervisor, for his ideas on the topic and his repeated suggestions of new approaches to problems.

\section{$\Pi_1^0$ Classes}

\begin{theorem} Every nonempty $\Pi_1^0$ class contains a member which is not relatively c.e. \label{piclass} \end{theorem}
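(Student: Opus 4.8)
The plan is to use Selman's theorem (Theorem~\ref{SelThm}), which reduces the problem to finding a member $X$ of the given $\Pi^0_1$ class $P$ with $\cset{X} \leq_e X$. By that characterization, such an $X$ fails to be relatively c.e., so it suffices to build one $X \in P$ for which $\overline{X}$ is enumeration-reducible to $X$. I would aim to construct $X$ so that its complement is encoded into $X$ itself in a uniform, positively-accessible way.

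Let me think about the standard approach here. Since $P$ is a nonempty $\Pi^0_1$ class, it equals the set of infinite paths through a computable tree $T \subseteq \strings$. I'd want to build $X \in [T]$ together with a single $\Sigma_1$ set $C$ witnessing $\overline{X} \leq_e X$. The natural idea is to arrange, along the construction, that whenever a number $n$ is to be kept \emph{out} of $X$, we place some positive information into $X$ that a fixed enumeration operator can detect. The cleanest device is to work on the even and odd bits separately: reserve the odd positions (or some computable sparse set of positions) to broadcast membership facts about the even positions. Concretely, I would try to ensure that $n \notin X$ can be certified by the appearance in $X$ of a finite ``marker'' block whose presence is a positive (c.e.) condition on $X$. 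Then $C = \{(n, E) : E \text{ is a marker block certifying } n \notin X\}$ gives $\cset{X} \leq_e X$.

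The key steps, in order, are: (1) fix a computable tree $T$ with $[T] = P$ nonempty; (2) design a coding scheme in which each potential non-membership fact $n \notin X$ is witnessed by positive information appearing in $X$, and verify the scheme is compatible with staying on $T$; (3) run a construction (a $\Pi^0_1$-class / forcing-style argument over $T$) that at each stage extends the current initial segment inside $T$ while installing the required coding markers, ensuring the final $X$ lies in $P$; (4) read off the $\Sigma_1$ set $C$ from the uniform coding and conclude $\cset{X} \leq_e X$; (5) invoke Theorem~\ref{SelThm} to conclude $X$ is not relatively c.e.

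The main obstacle I anticipate is step~(2)--(3): the coding must be \emph{positive} (to yield an enumeration reduction, only facts of the form ``some finite $E \subseteq X$'' may be used), yet it must also be consistent with membership in $P$, over which we have no freedom to set bits arbitrarily. On a tree $T$ with many forced bits, I cannot in general insert arbitrary marker blocks, so the coding scheme has to be robust to $T$'s constraints. One way to handle this is to pass to a sparse computable set of ``free'' coordinates where $T$ branches (using that $P$ is nonempty and, if necessary, shrinking to a subclass), and encode complement-information only at those coordinates; alternatively, exploit that it suffices to certify $n \notin X$ using the \emph{entire} initial segment $X \rstrd m$ for a suitable $m$ computable from $n$, since an initial segment is itself a finite positive condition on $X$ (one enumerates $(n, X\rstrd m)$ into $C$ exactly when $X \rstrd m$ has been seen and it already decides $n \notin X$). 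Making this last observation precise is, I expect, the crux: it says that $\overline{X} \leq_e X$ holds as soon as non-membership in $X$ is decided by finite initial segments in a c.e.\ fashion, which can be arranged by choosing $X$ along $T$ so that initial segments settle the complement monotonically.
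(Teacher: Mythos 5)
Your reduction via Theorem~\ref{SelThm} is exactly the paper's first move, and you correctly isolate the crux: certifying $n \notin X$ by purely positive information while remaining on the tree $T$. But neither of your two devices for resolving it works as stated, and the construction that would close the gap is missing. The marker-block idea founders because you have no computable access to where $T$ branches: the extendible part of $T$ is only $\Pi^0_1$, so a ``sparse computable set of free coordinates'' need not exist (the branching of $T$ along the path you are building may be computably invisible), and you cannot shrink to a subclass with designated free bits without begging the question. More seriously, your fallback --- ``an initial segment is itself a finite positive condition on $X$'' --- is false: an enumeration operator fires on finite $E \subseteq X$, i.e., on positive facts only, whereas $X \rstrd m$ carries the negative information of which bits below $m$ are zero. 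Enumerating $(n, \{k<m \setsep X(k)=1\})$ into $C$ is not the same as enumerating $(n, X \rstrd m)$: the axiom triggers whenever its finite set is contained in $X$, and nothing a priori prevents a real with additional ones below $m$ from satisfying it while $n \in X$.

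The missing idea, which is the entire content of the paper's proof beyond Selman, is to choose $X$ so that the positive part of an initial segment \emph{does} determine the whole segment relative to the c.e.\ set $C$ being built: take $X$ to be the rightmost path through $T$. Its canonical computable approximation $X_s$ moves only leftward, so a bit can change from 0 to 1 only if some smaller bit simultaneously changes from 1 to 0. Enumerating $(l, \{n < l \setsep X_s(n) = 1\})$ into $C$ each time the approximation turns a 1 into a 0 at position $l$ then gives a correct witness: if such an axiom later has its finite set contained in $X$, any subsequent movement of the approximation below $l$ would have turned some element of that set to 0, a contradiction; hence the approximation below $l$ was already final and $X(l) = 0$. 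This is precisely the ``monotone settling'' you gesture at in your last sentence, but without identifying the rightmost path (or an equivalent device guaranteeing one-directional movement of the approximation) your proposal stops at a statement of the difficulty rather than its solution, so as it stands the proof is incomplete at the step you yourself flagged as the crux.
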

\begin{proof} Let $T$ be a computable tree such that the members of the $\Pi_1^0$ class are the paths through $T$.  We will inductively 
construct a real $X$ which will be the rightmost path through $T$ and a set $C$ which will witness $\cset{X} \leq_e X$.  The 
procedure will be computable and we will only add elements to $C$.  Hence $C$ will be c.e.

We begin with $X_0 = \langle \rangle$ and $C_0 = \emptyset$.  At stage $s+1$, if $X_s \concat 1 \in T$ we let $X_{s+1} = X_s \concat 1$ 
and $C_{s+1} = C_s$.  Otherwise, let $l$ be greatest such that $(X_s \rstrd l) \concat 0 \in T$ ($l$ must exist since the class is nonempty).  We 
then let $X_{s+1} = (X_s \rstrd l) \concat 0$ and $C_{s+1} = C_s \cup (l, \{ n < l \setsep X_s (n) = 1 \} )$.

We observe that $X$ is the rightmost path through $T$ (we set $X(m)=0$ if and only if there is no path through $T$ extending $(X \rstrd m) 
\concat 1$).  We wish to show for all $m \num$ that $m \notin X$ if and only if $(m,E) \in C$ for some finite $E \subset X$ (viewed as 
a set).  Suppose $m \notin X$.  Let $s$ be least such that for all $t>s$ we have $X_s \rstrd m = X_t \rstrd m$.  Then $(m, \{ n<m \setsep X(n)=1 \} )$ 
was added to $C$ at stage $s+1$.  

Conversely, suppose $E \subset X$ and $(m,E)$ was added to $C$ at stage $s$.  We note that the value of $X$ at $n$ does not change from 0 to 1 unless the value of $X$ at $k$ changes from 1 to 0 for some $k < n$.  As a result, if $X_s \rstrd m \neq X_t \rstrd m$ for some $t>s$, then 
$\{ n<m \setsep X_s(n)=1 \} \not\subseteq \{ n<m \setsep X(n)=1 \}$.  This implies $E \not\subseteq X$ for a contradiction.  We conclude $X_s \rstrd m = X \rstrd m$, and hence $m \notin X$.  

Therefore $C$ witnesses $\cset{X} \leq_e X$, so by Theorem \ref{SelThm} we have that $X$ is not relatively c.e.
\end{proof}

Given any property such that there is a nonempty $\Pi_1^0$ class of reals that have the property, we can apply Theorem \ref{piclass} to find a real with this property which is not relatively c.e.  For example, it provides alternate proofs that there are 1-random reals and Schnorr trivial reals that are not relatively c.e.\ (Franklin and Stephan \cite{Franklin} constructed a $\Pi_1^0$ class, all of whose members are Schnorr trivial).  Using the contrapositive of Theorem \ref{piclass}, we have alternate proofs that there is no $\Pi_1^0$ class of 1-generics or 2-randoms.

\section{Relatively Simple and Above}

\begin{theorem} Let $X$ be 1-generic.  Then $X$ is relatively s.a.  \end{theorem}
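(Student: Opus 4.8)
The plan is to produce a single real $Y = \Phi^X$, where $\Phi$ is a fixed Turing functional, witnessing all three clauses of relative s.a. at once: $Y <_T X$, $X$ is c.e.($Y$), and no infinite $Z \subseteq \cset{X}$ is c.e.($Y$). Following the idea behind Jockusch's proof that $1$-generics are relatively c.e.a., I would build $\Phi$ so that $Y$ carries exactly the ``positive'' information of $X$ — from $Y$ one can enumerate $\{n : X(n) = 1\}$, so that $X$ is c.e.($Y$) — while presenting that information with enough delay that $Y$ cannot recover the boundaries needed to decide the $0$-positions. Concretely I would code the successive runs of $1$s of $X$ into $Y$ in a spaced-out fashion, so that $Y \leq_T X$ via a functional whose use is controlled, the eventual aim being that the part of $Y$ relevant to enumerating a given number depends only on a short initial segment of $X$. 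The delay exists precisely to make the genericity arguments below go through.

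To see $Y <_T X$ I would run the standard $1$-genericity diagonalization. For each index $e$ form the $\Sigma_1$ set $S_e \subseteq \strings$ of strings $\sigma$ forcing a disagreement between $\Psi_e^{\Phi^\sigma}$ and $\sigma$ at some argument below $|\sigma|$, where $\Psi_e$ ranges over functionals attempting $X = \Psi_e^Y$. Such a disagreement can always be forced by extending $\sigma$ to converge and then setting the relevant bit of $X$ against the computation, so each $S_e$ is dense; by $1$-genericity $X$ meets every $S_e$, whence $X \neq \Psi_e^Y$ for all $e$ and $X \not\leq_T Y$. Together with $X$ c.e.($Y$) this already reproves relative c.e.a.

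The heart of the theorem is that no infinite $Z \subseteq \cset{X}$ is c.e.($Y$); since every c.e.($Y$) set is some $W_e^Y$, it suffices to show each $W_e^Y$ is not an infinite subset of $\cset{X}$. Fix $e$ and introduce the $\Sigma_1$ set
$$S = \{\sigma \in \strings : \exists\, n < |\sigma|\ (\sigma(n) = 1\ \wedge\ n \in W_e^{\Phi^\sigma})\},$$
where $W_e^{\Phi^\sigma}$ denotes the numbers enumerated into $W_e$ using only oracle answers already determined by $\Phi^\sigma$. If $X$ meets $S$, some $n$ with $X(n) = 1$ lies in $W_e^Y$, so $W_e^Y \not\subseteq \cset{X}$. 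By the definition of $1$-genericity it remains to handle the case that $X$ strongly avoids $S$ above some $k$: every $\sigma \supseteq X \rstrd k$ has $\sigma \notin S$. I claim $W_e^Y$ is then finite. If not, pick $n \geq k$ with $n \in W_e^Y$; its enumeration is witnessed by some $\sigma \prec X$, and \emph{provided} $\Phi$ has been arranged so that the part of $Y$ used here does not depend on $X(n)$, flipping the bit at position $n$ from $0$ to $1$ leaves $n \in W_e^{\Phi^\sigma}$ intact and yields an extension of $X \rstrd k$ in $S$, contradicting avoidance. So in the avoiding case $W_e^Y$ is finite, and in either case $W_e^Y$ is not an infinite subset of $\cset{X}$.

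The main obstacle is exactly the interaction between the use of the reduction $Y = \Phi^X$ and this flipping step. For an arbitrary c.e.\ operator $W_e$ the computation enumerating $n$ may query bits of $Y$ whose computation from $X$ reads far past position $n$, so a naive flip of $X(n)$ could injure the very computation we depend on. The real work therefore lies in designing the coding $\Phi$ — making it sparse and delayed enough — so that among the (infinitely many, if $W_e^Y$ is infinite) enumerated numbers one can always be found, or forced, whose enumeration does not read $X(n)$, making the diagonalizing flip free; and in checking that this use discipline is compatible with $X$ remaining c.e.($Y$). Once $\Phi$ is arranged to meet this requirement, all three clauses hold simultaneously for the single real $Y$, so $X$ is relatively s.a.
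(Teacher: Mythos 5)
You have the right genericity skeleton---the $\Sigma_1$ set $S$ of strings $\sigma$ containing some $n$ with $n \in W_e^{\Phi^\sigma}$, the meet-or-avoid dichotomy, and the plan to refute the avoiding case by flipping a bit of $X$ from $0$ to $1$ while keeping the enumeration of that bit alive---and this matches the outline of the paper's proof. But there is a genuine gap at exactly the point you yourself flag as ``the real work'': the functional $\Phi$ is never constructed, and the mechanism you propose for it cannot work as stated. You want $\Phi$ ``sparse and delayed'' so that some enumeration of some $p \in W_e^Y$ \emph{does not read} the bits of $Y$ that depend on $X(p)$. But $W_e$ is an arbitrary enumeration operator: knowing $\Phi$, an adversarial index $e$ can insist on querying precisely the portion of $Y$ that codes the membership of $p$ before ever enumerating $p$, and since $X$ must be c.e.($Y$), $Y$ must somewhere depend on $X(p)$, so such a portion exists. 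No use discipline on the reduction $X \mapsto Y$ constrains what $W_e$ reads, so delay alone never makes the flip free, and in the avoiding case your argument has no move left.

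The paper's resolution is different in kind: it takes the concrete $Y = \{ \langle n,m \rangle \setsep n \in X \ \wedge \ \langle n,m \rangle \notin X \}$ (so $n \in X$ iff some $\langle n,m \rangle \in Y$, genericity guaranteeing a witness exists), writes $Y = j(X)$ for the induced map on strings, and in the avoiding case does not try to prevent the computation from reading bits that change---instead it changes \emph{more} bits of $X$ so that $j(\sigma)$ is \emph{exactly} unchanged. Flipping $X(p)$ to $1$ creates spurious witnesses at positions $\langle p,a \rangle$; these are killed by setting those bits to $1$ as well, which creates potential witnesses still higher up, and so on; since $\langle b,a \rangle > b$ the cascade moves strictly rightward and terminates inside the finite string $X \rstrd t$, yielding $\sigma \supseteq X \rstrd l$ with $p \in \sigma$ and $j(\sigma) = j(X \rstrd t)$, hence $p \in W_k^{j(\sigma)}$---a contradiction with avoidance that requires no assumption whatsoever about which oracle bits the computation queries. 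This repair-cascade (a fixed-point of the rewriting, reached in finitely many rounds) is the missing idea that your ``delay'' heuristic does not supply. Two smaller points: your separate diagonalization to get $X \not\leq_T Y$ is both unnecessary---if $X \leq_T Y$ then $\cset{X}$, which is infinite because $X$ is 1-generic, is itself c.e.($Y$), contradicting the simplicity clause, which is how the paper gets it for free---and ungrounded as written, since density of your sets $S_e$ also depends on the unspecified $\Phi$ and fails outright for some choices (e.g.\ $\Phi$ the identity).
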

\begin{proof} Let $\langle , \rangle$ be a computable pairing function such that $\langle m,n \rangle > m$ for all $m, n$.  Let $Y = \{ \langle n,m \rangle \setsep n \in X \ \wedge\ \langle n,m \rangle \notin X \}$.  Then $Y \leq_T X$, and we have that $X$ is c.e.($Y$), since $n \in X$ if and only if $\exists m\: [ \langle n,m \rangle \in Y]$ (since $X$ is generic, we can't have $\langle n,m \rangle \in X$ for every $m$).  It remains to show that there is no infinite $Z \subseteq \cset{X}$ such that $Z$ is c.e.($Y$) [this also gives $X \not\leq_T Y$, since $X \leq_T Y$ implies $\cset{X}$ is c.e.($Y$)].

Suppose towards a contradiction there is an infinite $Z \subseteq \cset{X}$ such that $Z = W^Y_k$ for some $k$.  We define a function 
$j: \strings \to \strings$ such that $Y = j(X)$\@.  Let $[j(\sigma)](\langle n,m \rangle) = 1$ iff $\sigma(n) = 1$ and 
$\sigma (\langle n,m \rangle) = 0$.  

To prove the theorem, we will use the facts that $X \cap W^{j(X)}_k = \emptyset$ and $X$ is 1-generic.  This implies there is an initial segment $X \rstrd l$ such that for every extension $\tau \supseteq X \rstrd l$ we have $\tau \cap W^{j(\tau)}_k = \emptyset$.  We will then get a contradiction by adding an element of $W^{j(\tau)}_k$ to $\tau$ without changing $j(\tau)$.  

Let $S = \{ \sigma \setsep \exists n \: [n \in \sigma\ \wedge\ n \in W^{j(\sigma)}_k] \}$.  Then $X \notin S$, so let $l$ be such that 
for every $\tau$ extending $X \rstrd l$ we have $\tau \notin S$\@.  Since $Z$ is infinite, let $p \in Z$ with $p>l$, and let $t>l$ be such 
that $p \in W^{j(X \smrstrd t)}_k$.  We note that for any $\sigma \supseteq X \rstrd l$ such that $j(\sigma) = j(X \rstrd t)$, we have $p \in W^{j(\sigma)}_k$ 
and $\sigma \notin S$, so $p \notin \sigma$.  We can now obtain a contradiction.

\begin{claim} There is a $\sigma \supseteq X \rstrd l$ such that $j(\sigma) = j(X \rstrd t)$ and $p \in \sigma$. \end{claim}
\begin{proof} We define a sequence of strings $\sigma_i$ of length $t$ inductively.  Let $\sigma_0 = X \rstrd t$.  Let $\sigma_1$ be given by
$$\sigma_1 (n) = \begin{cases} 1 & \; n = p \\ \sigma_0 (n) & \; \mbox{else} \end{cases}$$
At each stage we will remove all witnesses of changes made in the previous stage.

For each stage $i \geq 2$ we define $\sigma_i$ by
$$\sigma_i(\langle b,a \rangle)= \begin{cases} 1 & \; \sigma_{i-1}(b) \not= \sigma_{i-2}(b) \\ \sigma_{i-1}(\langle b,a \rangle) & \; \mbox{else} \end{cases}$$
We note that since $\langle b,a \rangle > b$, the least $m$ such that $\sigma_i(m) \not= \sigma_{i-1}(m)$ strictly increases with $i$.  Hence for some stage $i$ we have $\sigma_i = \sigma_{i-1}$, and we let $\sigma$ be this $\sigma_i$.  We also note that this procedure never changes a 1 to a 0.

We have $p \in \sigma$ and note that $\sigma \supseteq X \rstrd l$ since $p>l$.  It remains to show that $j(\sigma) = j(X \rstrd t)$.

Let $n,m$ be arbitrary such that $[j(X \rstrd t)](\langle n,m \rangle)=1$.  Then $X(n)=1$ and $X(\langle n,m \rangle )=0$; so at every stage $i$, $\sigma_i(n)=1$ and $\sigma_i(\langle n,m \rangle)= \sigma_{i-1}(\langle n,m \rangle )$.  Hence $\sigma(n)=1$ and $\sigma(\langle n,m \rangle)=0$ so $[j(\sigma)](\langle n,m \rangle)=1$.  

Conversely, let $n,m$ be arbitrary such that $[j(\sigma)](\langle n,m \rangle)=1$.  Then $\sigma(n)=1$ and $\sigma(\langle n,m \rangle)=0$.  The latter implies $X(\langle n,m \rangle)=0$.  Suppose $X(n)=0$.  Let $i$ be least such that $\sigma_i(n)=1$.  Then $\sigma_{i+1}(\langle n,m \rangle)=1$, 
so $\sigma(\langle n,m \rangle)=1$ for a contradiction.  Hence $X(n)=1$ so $[j(X \rstrd t)](\langle n,m \rangle)=1$.

Therefore $j(\sigma) = j(X \rstrd t)$.
\end{proof}
Thus, there is no infinite $Z \subseteq \cset{X}$ which is c.e.($Y$).  Therefore $X$ is relatively s.a.
\end{proof}

\section{Conclusion}

There is still room to investigate relatively c.e.\ reals.  One question we can consider is: Is there a 1-random $X \not\geq_T \es^\prime$ such that $X$ is not relatively c.e.?  By Franklin and Ng \cite{DifRan} this is equivalent to the question: Is there a difference random which is not relatively c.e.?  The approach used in this paper cannot be used directly to answer this question.  The real constructed by applying Theorem \ref{piclass} to a $\Pi^0_1$ class of 1-randoms is Turing complete (as is Chaitin's $\Omega$).  Similarly, we can ask: Is there a weakly 2-random real which is not relatively c.e.?  We can also look at further classifying which reals are relatively s.a. 

\nocite{*}
\bibliography{RelativeCE}
\end{document}